\newtheorem{theorem}{Theorem}
\newdefinition{definition}{Definition}
\renewcommand{\cite}{\citep}
\newtheorem{lemma}{Lemma}
\newtheorem{corollary}{Corollary}
\newtheorem{proposition}{Proposition}
\DeclareRobustCommand{\stirling}{\genfrac\{\}{0pt}{}}
\renewcommand\bibsection%
\newcommand{\ignore}[1]{}
\DeclareMathAlphabet{\mathcal}{OMS}{cmsy}{m}{n}      
\begin{document}

\authorheadline{Blake Wilson}
\runningtitle{Bounds on sweep-covers by Raney Numbers}
 
\begin{frontmatter}

\title{Bounds on sweep-covers by Raney Numbers}
\author[1]{Blake Wilson}
 
\address[1]{School of Electrical and Computer Engineering, Purdue University, West Lafayette, IN
  \ead{wilso692@purdue.edu}}

\received{10 January 2022}
 
\begin{abstract}
In this work, we introduce a vertex separator in trees known as a sweep-cover that is defined by an ancestor-descendent relationship with all nodes in the tree. We prove the recurrence relation of sweep-covers with $n$ subcovers $P_{\Delta, \gamma}(n)$ on a class of infinite $\Delta$-ary trees with constant path lengths $\gamma$ between the $\Delta$-star internal nodes. Then, we provide recurrence relations for Raney numbers over integer compositions and show that they provide a lower-bound for sweep-covers such that $P_{\Delta, \gamma}(n) = \Omega\left( \frac{\sqrt{2 \pi} n^{\Delta n + \Delta + \frac{3}{2}}}{e^n ((\Delta-1)n+\Delta+1)!(n+1)!} \gamma \right)$.
\end{abstract}

\end{frontmatter}

\section{Introduction}
\label{sec:Introduction}
Much of the early work in combinatorial graph theory was devoted to computing the enumeration functions of classes of trees with respect to the number of nodes or edges in the tree \cite{Harary1964,Otter1948}. However, there exist structures within trees whose generating function can only be defined for a particular class of trees. One such structure is a vertex separator\cite{10.5555/500866,Escalante1972}. A vertex separator $S$ is a collection of nodes in a graph $\mathcal{T}$ that disconnects two sets of nodes from one another. More specifically, a sweep-cover is a collection of sets of sibling nodes in a tree that disconnects the root node, $v_s \in \mathcal{T}$, from all the leaf nodes $\mathcal{G} \subseteq \mathcal{T}$ such that no two nodes in $S$ are an ancestor or descendant of one another. We also allow for the collection $S$ to contain the root node or a subset of leaf nodes. However, due to the ancestor-descendant requirement for sweep-covers, a sweep-cover that contains the root node cannot also contain a subset of leaf nodes. In this work, we provide an algorithm for computing all sweep-covers in a given tree. Then, we prove the recurrence relation for sweep-covers on infinite, labeled and directed (ILD) trees. Finally, we show a surjective relationship between the set of Raney trees and the set of sweep-covers in ILD trees.
\section{Sweep-Covers}
\label{sec:sweep_covers}
This section provides the formal definition of a sweep-cover and proves how to compute all sweep-covers in a tree using the later defined $\alpha$ and $\pi$ mappings. Let $\mathcal{T}= \{\mathcal{V}, \mathcal{E}\}$ be a directed tree where $\mathcal{V}$ and $\mathcal{E}$ are the sets of nodes and edges, respectively, in $\mathcal{T}$. Let $v_s \in \mathcal{V}$ be the root node of $\mathcal{T}$. Given any tree, a collection of nodes in the tree that satisfies the following definition is considered a sweep-cover.

\begin{definition}[Sweep-Cover]
Consider a directed tree $\mathcal{T} = \{\mathcal{V}_\mathcal{T}, \mathcal{E}_\mathcal{T} \}$. Let $S \subset 2^{\mathcal{V}_\mathcal{T}}$ be a collection of non-empty sets of nodes chosen from $\mathcal{V}_\mathcal{T}$, i.e., $\forall S_i \in S : S_i \subset \mathcal{V}_\mathcal{T}$. We will refer to each set in a sweep-cover as a subcover, i.e., $S_i \in S$. Let $\mathcal{C}$ be the set of all nodes in $S$, i.e., $\mathcal{C} = \bigcup\limits_{S_i \in S} S_i$. Then, we denote the sets $\mathcal{A}_\mathcal{C}$ and $\mathcal{D}_\mathcal{C}$ to be the set of all ancestors and the set of all descendants, respectively, of the nodes in $\mathcal{S}$. We say $S$ is a sweep-cover of $\mathcal{T}$ if it fulfills the following conditions:

\begin{enumerate}
    \item Each distinct pair $S_i,S_j \in S$, where $S_i \ne S_j$, is disjoint i.e., $S_i \cap S_j = \emptyset$. \label{sweep_cover:disjointne}
    \item Each set may only contain sibling nodes, i.e. nodes which share a parent. \label{sweep_cover:sibling}
    \item Let $\mathcal{C} = \bigcup\limits_{S_i \in S} S_i$, then $\mathcal{C} \cup \mathcal{A}_{\mathcal{C}} \cup \mathcal{D}_{\mathcal{C}} = \mathcal{V}_\mathcal{T}$. \label{sweep_cover:complete_graph}
    \item For any unique pair of nodes $v_i,v_j \in \bigcup\limits_{S_i \in S} S_i$, $v_i$ cannot be a descendant or ancestor of $v_j$.  \label{sweep_cover:ancestor}
\end{enumerate}
\label{def:SweepCover}
\end{definition}

\subsection{Constructing Sweep-Covers in Trees}

Let's begin with the simplest example of a sweep-cover; $S = \{\{v_s\}\}$ where $v_s$ is the root node of the tree of interest. We refer to this sweep-cover as the \textit{root} sweep-cover. One can verify that the root sweep-cover satisfies Definition \ref{def:SweepCover} for any tree. Beginning here, we can show that every other sweep-cover can be defined via a sequence of mappings from this original sweep-cover. For example, consider the set of children $C$ of the root node. The first mapping for constructing sweep-covers is to swap a singleton set $\{v\} \in S$ with the set of its children $C$, i.e., $S \backslash \{v\} \cup C$. This not only applies to sweep-covers that contain one singleton, but any number of singletons in a sweep-cover can be replaced by their children. We define this swap using the following mapping.

\begin{definition}[Parental Map - $\alpha$]
Consider a sweep-cover $S$ for a tree $\mathcal{T}$. We require that $S$ contains at least one singleton subcover $\{v\}$ and $v$ has a nonempty set of children $C$. Then, $\alpha$ maps a sweep-cover $S$ to another sweep-cover by replacing $\{v\}$ with $C$, i.e., $\alpha(S,v) = S \backslash \{v\} \cup C$. Likewise, we define the inverse of $\alpha$ as $\alpha^{-1}(S,C) = S \backslash C \cup \{\{v\}\}$ when $C$ contains a complete set of sibling nodes and their parent exists, i.e., $C$ is not a singleton of the root node. \label{def:alpha_function}
\end{definition}

\begin{lemma}
$\alpha(S,v)$ and $\alpha^{-1}(S,C)$ always produce a sweep-cover.
\label{lem:alpha_func}
\end{lemma}
\begin{proof}
 Let $S$, $\mathcal{T}$, $v$, and $C$ be defined as in Definition \ref{def:alpha_function} and let $S' = \alpha(S,v)$. First, $C$ naturally satisfies Definitions \ref{def:SweepCover}.\ref{sweep_cover:disjointne} and \ref{def:SweepCover}.\ref{sweep_cover:sibling}. By definition, $\mathcal{A}_{S'} = \mathcal{A}_{S} \cup \{v\}$ and $\mathcal{D}_{S'} = \mathcal{D}_{S} \backslash C$. Therefore,
\begin{align}
    \bigcup\limits_{S_i \in S \backslash \{v\}} S_i \cup (C \backslash \{v\}) \cup (\mathcal{A}_{S} \cup \{v\} ) \cup (\mathcal{D}_{S} \backslash C) = \bigcup\limits_{S_i \in S} S_i \cup \mathcal{A}_{S} \cup \mathcal{D}_{S}  = \mathcal{V}, \label{eq:sc_comp_graph_proof}
\end{align}
which implies $S'$ satisfies Definition \ref{def:SweepCover}.\ref{sweep_cover:complete_graph}. Likewise, if $S$ satisfied Definition \ref{def:SweepCover}.\ref{sweep_cover:ancestor}, then no other nodes in $S$ are ancestors or descendants of $\{v\}$. Because the nodes in $C$ share all ancestors and descendants with $v$, no other nodes in $S'$ are ancestors or descendants of $C$. Therefore, $S'$ must satisfy Definition \ref{def:SweepCover}.\ref{sweep_cover:ancestor}. By following the arguments above and inverting the roles of $C$ and $\{v\}$, one can show that $\alpha^{-1}(S,C)$ will always produce a sweep-cover if $C$ has a parent $\{v\}$.
\end{proof}

Continuing the previous example, let the sweep-cover $S$ be composed of the children of the root node of a tree, i.e., $S = \{C\}$. Suppose $C$ is composed of two nodes, i.e., $C = \{v_1, v_2\}$. If we partition $C$ into two sets $C_1 = \{v_1\}$ and $C_2 = \{v_2\}$ and redefine $S$ to be $S = \{C_1, C_2\}$, then we can verify that $S$ is still a sweep-cover. This kind of partitioning extends to any sweep-cover with a subcover of more than one node. In fact, partitioning subcovers into smaller subcovers is the second mapping we use to construct sweep-covers from other sweep-covers. To do this, we first define the set of partitions of a set $C$ as $P_C$.

\begin{definition}[Partition Map - $\pi$]
Consider a sweep-cover $S$ for a tree $\mathcal{T}$. Assume that $S$ contains a non-singleton subcover $S_i \in S$. Let $p \in P_{S_i}$ be a partition of $S_i$. We define the $\pi$-mapping to take a sweep-cover $S$ and construct a new sweep-cover by replacing $S_i$ with all the sets in $p$, i.e., $\pi(S,p) = S \backslash S_i \cup p$. Likewise, we can also define the inverse of $\pi$ as $\pi^{-1}(S,S_i) = S \backslash p \cup S_i$. Note that due to the uniqueness of nodes in $\mathcal{T}$, it is sufficient to only specify $S$ and $S_i$ (or $p$) for arguments in both $\pi$ and its inverse. \label{def:pi_function}
\end{definition}

\begin{corollary}
By noting that the union of any partition of a set is the original set, one can follow similar arguments to those presented in Lemma \ref{lem:alpha_func} to show that $\pi(S,p)$ and $\pi^{-1}(S,S_i)$ produce a sweep-cover.
\label{cor:partition_function}
\end{corollary}

\begin{definition} [Depth]
Let $\mathcal{T}= \{\mathcal{V}, \mathcal{E}\}$ be a directed tree, with a root node, $v_s \in \mathcal{V}$.  Define the {\it depth} of a given node $v \in \mathcal{V}$, to be the number of edges in the unique path from $v_s$ to $v$ in the tree. Assume that the depth of the root node $v_s$ is 0.
\label{def:depth}
\end{definition}

\begin{definition} [Linear Path]
Consider a set of edges $\mathcal{E}$ in some tree $\mathcal{T}$. We define a {\it linear path} to be an ordered set of edges $\{ (v_{i_1}, v_{i_2}), (v_{i_2},v_{i_3})  ... (v_{i_{m-1}},v_{i_m}) \} \subset \mathcal{E}$ that connects the initial node $v_{i_1}$ to the terminal node $v_{i_m}$ such that each edge's source node in the linear path has at most one outgoing edge in $\mathcal{E}$.
\label{def:Linear_Path}
\end{definition}

To see how we can combine both $\alpha$ and $\pi$ mappings, first consider the root sweep-cover $S = \{\{v_s\}\}$. Beginning with the root node, as long as the subcover of interest contains a node which is in a linear path, then we can only apply $\alpha$-mappings until the subcover is a singleton leaf node or it is replaced with a non-singleton, i.e., $S = \{C\}$. Then, we can apply a $\pi$-mapping to construct a sweep-cover $S = \{S_1,S_2,...,S_n\}$ where every subcover is a part of $C$. Let's consider a simple  partition of $C$ into only two sets $p = \{S_1,S_2\}$ such that $S_1 = \{v_1\}$ and $S_2 = \{v_2, ..., v_{|C|} \}$. Then, apply $\pi(S,p)$ to redefine $S$ to be $S = \{S_1,S_2\}$. Noting that $S_1$ is a singleton, we can immediately apply $\alpha(S,S_1)$ to replace it with a set of the children $C_1$ of $v_1$, i.e., $S = \{C_1,S_2\}$. This process of using $\pi$-mappings to create singletons and then using $\alpha$-mappings to use nodes of greater depth continues until $S$ contains only leaf nodes and the mappings can no longer be applied. If we begin with the root sweep-cover and enumerate all possible ways of applying $\alpha$ and $\pi$ mappings in this manner, then a large set of sweep-covers can be constructed. But, is this process sufficient to construct all sweep-covers? As it turns out, the answer is yes. We can show this by contradiction. To do this, first we will show that all sweep-covers, except the root sweep-cover, are constructible from a previous sweep-cover by applying $\alpha^{-1}$ or $\pi^{-1}$. Then, we will show that recursively applying the inverse mappings $\alpha^{-1}$ and $\pi^{-1}$ on any sweep-cover will always construct the root sweep-cover.

\begin{lemma}
Consider a sweep-cover $S$ for a tree $\mathcal{T}$ with a root node $v_s \in \mathcal{T}$. If $S$ is not the root sweep-cover, then $S$ is constructible from another sweep-cover $S'$ by applying $\alpha$ or $\pi$ mappings on $S'$. \label{lem:inverting_mapping}
\end{lemma}
\begin{proof}
We will prove this by contradiction. Suppose there exists a sweep-cover $S = \{S_1,S_2,...,S_n \}$ which is not constructible from $\alpha$ or $\pi$. Then, either it is impossible to invert $\alpha$ or $\pi$ on $S$ or to invert either mapping does not yield a sweep-cover. We know by Lemma \ref{lem:alpha_func} and Corollary \ref{cor:partition_function} that $\alpha^{-1}$ and $\pi^{-1}$ produce sweep-covers if $S$ is a sweep-cover. So, it must be that we cannot apply $\alpha^{-1}$ and $\pi^{-1}$ to $S$. Assuming it is impossible to apply $\alpha^{-1}$, then there does not exist a subcover $S_i \in S$ that contains a complete set of sibling nodes for $\alpha^{-1}$. If it is also impossible to apply $\pi^{-1}$, then to satisfy Definitions \ref{def:SweepCover}.\ref{sweep_cover:complete_graph} and \ref{def:SweepCover}.\ref{sweep_cover:ancestor}, there must exist a subset of subcovers $\bar{S} \subset S$ that contains nodes which are descendants of the siblings of the nodes in $S_i$. But, to ensure that $\alpha^{-1}$ and $\pi^{-1}$ cannot be applied on $\bar{S}$, we must recursively apply the same arguments to $\bar{S}$ ad infinitum. Eventually, we will need to consider a subset of subcovers that contain only leaf nodes. However, to ensure that Definitions \ref{def:SweepCover}.\ref{sweep_cover:complete_graph} and \ref{def:SweepCover}.\ref{sweep_cover:ancestor} are satisfied on a set of leaf nodes, either all the leaf nodes are contained in one subcover, in which case $\alpha^{-1}$ can be applied to the subcover, or there exists a collection of subcovers which partition the set of leaf nodes, in which case $\pi^{-1}$ can be applied to the collection. Therefore, for every sweep-cover, it must be the case that one can apply $\alpha^{-1}$ or $\pi^{-1}$, which will always yield another sweep-cover if $S$ is not the root sweep-cover.
\end{proof}

Applying Lemma \ref{lem:inverting_mapping}, one can easily see that applying a sequence of $\alpha^{-1}$ and $\pi^{-1}$ mappings beginning at any sweep-cover $S$ will eventually yield the root sweep-cover.

\begin{lemma}
The set of all sweep-covers in a tree are constructible by beginning with the root sweep-cover and applying a sequence of $\alpha$ and $\pi$ mappings. \label{lem:all_sweep_construct_ap}
\end{lemma}
\begin{proof}
By Lemma \ref{lem:inverting_mapping}, we can always apply $\alpha^{-1}$ and $\pi^{-1}$ on a sweep-cover that isn't the root sweep-cover. Note that by using $\alpha^{-1}$, we replace a set of sibling nodes for a singleton of their parent node. This means that a sequence of $\alpha^{-1}$ mappings will always replace subcovers with ancestors of a lower depth. Likewise, $\pi^{-1}$ consolidates a partition of sibling nodes into a single set of sibling nodes, which can be immediately inverted to a singleton of their parent node via $\alpha^{-1}$. So, a sequence of $\alpha^{-1}$ and $\pi^{-1}$ will repeatedly replace subcovers with their parent until a node does not have a parent. Due to the in-degree of a tree being one, the root node is an ancestor of every node in the tree. Because the only node that doesn't have a parent is the root node, the final sweep-cover after applying an exhaustive sequence of $\alpha^{-1}$ and $\pi^{-1}$ mappings must yield the root sweep-cover.
\end{proof}

Now that we know all sweep-covers are constructible from sequences of $\alpha$ and $\pi$ mappings, we will analyze the mapping $h : \mathcal{H} \rightarrow 2^{\mathcal{V}_{\mathcal{T}}}$ between the set of all sequences of $\alpha$ and $\pi$ mappings $\mathcal{H}$ to sweep-covers. Firstly, we know the mapping $h$ is non-injective and surjective. Assuming the domain $\mathcal{H}$ is well constructed so that each sequence does not violate the assumptions of Definitions \ref{def:alpha_function} and \ref{def:pi_function}, then every sequence of $\alpha$ and $\pi$ mappings constructs one sweep-cover, i.e., $h$ is surjective. Additionally, consider the fact that two sequential set partitions can be equivalent to one set partition. Likewise, two sequential $\pi$-mappings can be equivalent to one $\pi$-mapping. So, multiple sequences can yield the same sweep-cover, i.e., $h$ is non-injective.

\begin{corollary}
The mapping $h : \mathcal{H} \rightarrow 2^{\mathcal{V}_{\mathcal{T}}}$ between the set of all sequences of $\alpha$ and $\pi$ mappings $\mathcal{H}$ to the set of all sweep-covers is non-injective and surjective.
\end{corollary}

\begin{figure}[ht]
    \centering
    \includegraphics[width=0.25\linewidth]{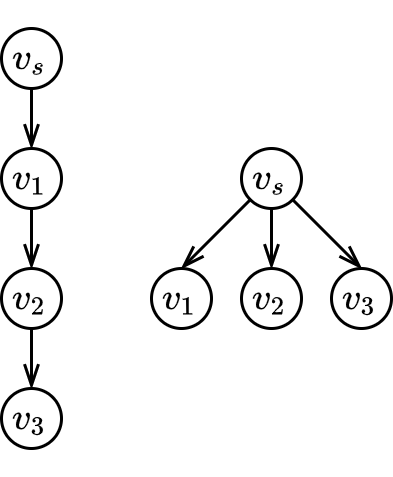}
    \caption{Both trees contain four nodes and three edges. The left tree has only 4 sweep-covers, all of which are constructible from $\alpha$-mappings, whereas the right tree has 6 sweep-covers which are constructible from $\alpha$ and $\pi$ mappings.}
    \label{fig:different_sweep_covers}
\end{figure}

Secondly, we know that the sequence of $\alpha$ and $\pi$ mappings depend entirely on the tree structure. For example, consider the trees in Figure \ref{fig:different_sweep_covers}. Despite both trees having the same number of nodes and edges in Figure \ref{fig:different_sweep_covers}, each tree has a different number of sweep-covers and set of sequences of $\alpha$ and $\pi$ mappings for constructing their sweep-covers. Therefore, we cannot construct a function that computes them given only the number of nodes or edges in a tree without knowing the tree's structure. However, it is still possible to compute all sweep-covers using an algorithm. But, to introduce such an algorithm, we will first consider a recursive formalism of the $\alpha$ and $\pi$ mappings from the perspective of the nodes in the tree. Here, we restrict the algorithm to only generate sweep-covers with $n$ subcovers, though one could modify it to generate all number of sweep-covers up to an $n$. First, we must observe that sweep-covers can be defined by sets of sweep-covers on subtrees. Namely, given a sweep-cover $S$, there exist subsets of subcovers in $S$ which form sweep-covers of subtrees in $\mathcal{T}$. In fact, all subcovers in a sweep-cover belong to a sweep-cover for a subtree in the original tree. Combining all of these subtree sweep-covers constructs an overall sweep-cover in the tree. To prove this, we will introduce another crucial aspect of sweep-covers, their induced subtrees.

\subsection{Induced Subtrees From Sweep-Covers}

\begin{definition}[Induced Subtrees from Sweep-Covers]
Consider a sweep-cover $S$ on a tree $\mathcal{T} = \{ \mathcal{V}, \mathcal{E} \}$. For each subcover $S_i \in S$, construct a set of nodes $\mathcal{V}_i$ by taking every node in $S_i$ along with all of its ancestors and descendants, i.e., $\mathcal{V}_i = S_i \cup \mathcal{A}_{S_i} \cup \mathcal{D}_{S_i}$.Next, define $\mathcal{E}_i$ to be the set of all edges in $\mathcal{E}$ which can be constructed from the nodes in $\mathcal{V}_i$, i.e., $\mathcal{E}_i = \{ (v_i,v_j) \in \mathcal{E} : v_i,v_j \in \mathcal{V}_i \}$. Using these sets, we construct an induced subtree $\mathcal{B}_i = \{ \mathcal{V}_i, \mathcal{E}_i \}$ for each subcover. We say the set $\mathcal{B} = \{\mathcal{B}_i : S_i \in S \}$ is a set of induced subtrees defined by $S$. \label{def:induced_subtree}
\end{definition}
By a simple extension of Definition \ref{def:SweepCover}.\ref{sweep_cover:complete_graph} with Definition \ref{def:induced_subtree}, $\bigcup\limits_{\mathcal{T}_i \in \mathcal{B}}\mathcal{V}_i = \mathcal{V}$ and $\bigcup\limits_{\mathcal{T}_i \in \mathcal{B}}\mathcal{E}_i = \mathcal{E}$.
\begin{corollary}
The collection of induced subtrees $\mathcal{B}$ in a tree $\mathcal{T}$ satisfies $\bigcup\limits_{\mathcal{B}_i \in \mathcal{B}}\mathcal{B}_i = \mathcal{T}$.  \label{cor:sweep_cover_preserves_R}
\end{corollary}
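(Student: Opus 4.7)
The plan is to prove the graph equality by showing set equality for the vertex sets and edge sets separately, and for each of those, to prove containment in both directions. The $\subseteq$ direction (union contained in $\mathcal{T}$) is immediate from the definition of each $\mathcal{T}_i$, since $\mathcal{V}_i \subseteq \mathcal{V}$ and $\mathcal{E}_i \subseteq \mathcal{E}$ by construction. So the substantive work is proving $\mathcal{V} \subseteq \bigcup_i \mathcal{V}_i$ and $\mathcal{E} \subseteq \bigcup_i \mathcal{E}_i$.

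For the vertex inclusion, I would invoke Definition~\ref{def:SweepCover}.\ref{sweep_cover:complete_graph} directly: every $v \in \mathcal{V}$ is either itself in some $S_i$, or is an ancestor of some node in $\bigcup_i S_i$, or is a descendant of some node in $\bigcup_i S_i$. Each of these cases places $v$ into the vertex set $\mathcal{V}_i$ of some induced subtree $\mathcal{T}_i$, because $\mathcal{V}_i$ is defined as $S_i$ together with all ancestors and descendants of nodes in $S_i$.

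For the edge inclusion, the argument is a short case analysis on an arbitrary edge $(u,v) \in \mathcal{E}$, where $u$ is the parent of $v$. If $v \in S_i$, then $u$ is an ancestor of $v$, hence $u,v \in \mathcal{V}_i$. If $v$ is a (strict) ancestor of some $S_i$, then $u$ is too, so $u,v \in \mathcal{V}_i$. If $v$ is a descendant of some $s \in S_i$, then either $u = s \in \mathcal{V}_i$ or $u$ is also a strict descendant of $s$, and in either case $u, v \in \mathcal{V}_i$. In every case both endpoints lie in a common $\mathcal{V}_i$, and because $\mathcal{E}_i$ is defined as the set of all edges of $\mathcal{E}$ between nodes of $\mathcal{V}_i$, the edge $(u,v)$ belongs to $\mathcal{E}_i$.

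I do not expect a serious obstacle here; the result is essentially a bookkeeping consequence of Definition~\ref{def:SweepCover}.\ref{sweep_cover:complete_graph} combined with the definition of the induced sub-graphs. The one place to be careful is the descendant case of the edge analysis, where I must notice that the parent of a strict descendant of $s \in S_i$ is either $s$ itself or still a descendant of $s$, so it never escapes $\mathcal{V}_i$; this uses that the ancestor chain inside a sub-tree is unbroken.
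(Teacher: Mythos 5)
Your proposal is correct and follows the same route as the paper: the vertex equality comes straight from Definition~\ref{def:SweepCover}.\ref{sweep_cover:complete_graph} and the edge equality from the definition of $\mathcal{E}_i$. The paper states both facts in a single sentence, whereas you spell out the two containments and the case analysis on an edge's endpoints, which is a faithful (and more careful) expansion of the same argument.
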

\begin{definition}[Lowest Linear Descendant]
Consider a tree $\mathcal{T} = \{\mathcal{V}, \mathcal{E} \}$. For each node $v \in \mathcal{V}$, define $\mathcal{L}(v)$ to be the descendant of $v$ (or $v$ itself) such that a linear path exists in $\mathcal{E}$ from $v$ to $\mathcal{L}(v)$ and $\mathcal{L}(v)$ has an out degree not equal to 1. 
\label{def:lowest_linear_descendant}
\end{definition}

\begin{lemma}
 Consider a sweep-cover $S$ on a tree $\mathcal{T}$ with a set of induced subtrees $\mathcal{B}$ defined by a subcover $S_i$ from a tree with root node $v_s \in \mathcal{V}$. Then, there exists a linear path between $v_s$ and $v_p$ in $\mathcal{B}_i$. If $S_i$ contains multiple nodes, then $v_p$ is the lowest linear descendant of $v_s$ in $\mathcal{B}_i$.
 \label{lem:sweep_cover_linear_path}
\end{lemma}
\begin{proof}
By definition of $\mathcal{E}_i$, there do not exist edges to nodes outside of those that exist between the nodes in $\mathcal{V}_i$. Because there only exists one incoming edge from the parent of all the ancestors of $v_p$, withholding the root node, then there exists a linear path between $v_s$ and $v_p$ by Definition \ref{def:Linear_Path}. If $S_i$ contains more than one node, then $v_p$ has an out-degree greater than one so $v_p$ is the lowest linear descendant of $v_s$. 
\end{proof}

\begin{corollary}
 Consider a sweep-cover $S$ on a tree $\mathcal{T}$ with a set of induced subtrees $\mathcal{B}$. For each induced subtree $\mathcal{B}_i \in \mathcal{B}$ and its corresponding subcover $S_i \in S$, the collection $\{S_i\}$ is a sweep-cover for $\mathcal{B}_i$. \label{cor:induced_subtree_covered}
\end{corollary}
\begin{proof}
Lemma \ref{lem:sweep_cover_linear_path} implies there exists a linear path between $v_s$ and the parent of the nodes in $S_i$ in $\mathcal{T}_i$. Therefore, one can repeatedly apply $\alpha$-mappings from $v_s$ to $S_i$ and show that $\{S_i\}$ is a sweep-cover with one subcover for $\mathcal{B}_i$.
\end{proof}
\begin{lemma}
 Consider a sweep-cover $S$ on a tree $\mathcal{T}$. Given a node $v_u \in \mathcal{T}$, let $\mathcal{T}_u$ be the subtree induced by $v_u$ that contains all of the descendants and ancestors of $v_u$. If there exists a subcover $S_i$ which contains descendants of $v_u$, then there exists a subset of subcovers $S'_u \subseteq S$, that form a sweep-cover of $\mathcal{T}_u$.
 \label{lem:induced_subtree_covered}
\end{lemma}
\begin{proof}
To ensure that Definition \ref{def:SweepCover}.\ref{sweep_cover:complete_graph} is satisfied without violating Definition \ref{def:SweepCover}.\ref{sweep_cover:ancestor}, there must exist a set of subcovers $S'_u$ with nodes in $\mathcal{T}_u$ which are not ancestors of $S_i$ but whose ancestors and descendants contain all of $\mathcal{T}_u$. Naturally, if $S$ is a sweep-cover, then Definitions \ref{def:SweepCover}.\ref{sweep_cover:disjointne} and \ref{def:SweepCover}.\ref{sweep_cover:sibling} are satisfied for $S'_u$ as well.
\end{proof}

We can apply this result to construct all sweep-covers recursively by computing sweep-covers on induced subtrees rooted at different nodes. First, we will show that given an oracle which can compute all sweep-covers of size $m < n$, we can compute all sweep-covers of size $n$. This will be the main proof for the induction step for our algorithm in the following section. But, before we can prove this, we need to introduce our notation for integer compositions to assist the remainder of this work.

\begin{definition}[Integer Compositions]
We denote $\Gamma_{x,m}$ to be the collection of all integer compositions of the integer $x$ into $m$ parts, i.e., there are $m$ ordered integers in each partition set which sum to $x$. For a given ordered integer partition of $x = x_1 + x_2 + ... + x_m$, the corresponding ordered partition set is denoted by $X = \{ x_1 , x_2 , ..., x_m \}$
\label{def:ordered_integer_parts}
\end{definition}

\begin{lemma}
Consider a tree $\mathcal{T}$ with a root node, $v_s \in \mathcal{V}$. Let $C$ be the set of children of the lowest linear descendant $\mathcal{L}(v_s)$ of the root node and let $P_C$ be the set of all partitions of $C$. Given an oracle $\mathcal{O}_m(\mathcal{T}_u)$ which computes all sweep-covers with $m < n$ subcovers on an induced subtree defined by $v_u \in \mathcal{V}$, we can compute all sweep-covers with $n$ subcovers in $\mathcal{T}$.
\label{lem:inductive_step}
\end{lemma}
\begin{proof}


First, every sequence of mappings must begin with repeated $\alpha$-mappings beginning with the root sweep-cover until it is replaced by the set of children $C$ of $\mathcal{L}(v_s)$. Then, we consider all possible ways of applying $\pi$-mappings to decompose $C$ so that we can apply an $\alpha$-mapping. By Lemma \ref{lem:all_sweep_construct_ap}, this decomposition is necessary to construct all of the remaining sweep-covers. So, applying every possible $\pi$-mapping is equivalent to considering every partition $p \in P_C$ and constructing all feasible sweep-covers using $p$ and the oracle. Each partition $p \in P_C$ is decomposable into two sets $L \subseteq p$ and $R \subseteq p$, where $L$ is the set of all singletons in $p$ and $R$ is the set of all non-singletons in $p$. For any partition $p \in P_C$, if $|p| = n$, then $p$ is a sweep-cover with $n$ subcovers. For any partition $|p| > n$, because $\alpha$-mappings maintain the number of subcovers and $\pi$-mappings only increase the number of subcovers, we cannot construct a sweep-cover of size $n$ using any $p$ where $|p| > n$ without mapping $p$ to a different partition. Because we will already consider all partitions, we will ignore any $p$ where $|p| > n$ to avoid double counting. For any partition $p$ such that $|p| < n$, there are two cases to consider. If $|L| == 0$, then we cannot apply an $\alpha$-mapping to the sweep-cover $S = \{p\}$, without changing $p$. So, for the same reasons as $|p| > n$, we ignore any partition where $p < n$ and $|L| == 0$. However, if $|L| > 0$, then we can apply $\alpha$-mappings to any of the singletons in $L$. This means that any subcovers after applying the $\alpha$-mappings will be in the induced subtrees defined by the singletons in $L$, i.e., $\mathcal{L} = \{\mathcal{T}_{L_1},\mathcal{T}_{L_2},...,\mathcal{T}_{|L|}\}$. Lemma \ref{lem:induced_subtree_covered} implies that all of these subcovers must form a sweep-cover for each induced subtree in $\mathcal{L}$. Because we already applied a $\pi$-mapping on $C$, the number of subcovers required from each induced subtree in $\mathcal{L}$ must be less than $n$ in total, i.e., $m \leq n - |R|$. To exhaustively consider all ways of finding $n - |R|$ subcovers from $|L|$ distinct induced subtrees, consider the set of all integer compositions of $n - |R|$ into $L$ parts, i.e., $\Gamma_{n - |R|, |L|}$. For each integer composition $X \in \Gamma_{n - |R|, |L|}$, we pair part $X_k$ with tree $\mathcal{T}_{L_k}$ and use the oracle $\mathcal{O}_{X_k}(\mathcal{T}_{L_k})$ to compute the set $\mathcal{Q}^{X_k}_{L_k}$ of all sweep-covers in $\mathcal{T}_{L_k}$ with $X_k$ subcovers. After computing $\mathcal{Q}^{X_k}_{L_k}$ for a given subtree and integer composition, the cartesian product $\mathcal{Q}^{X_k}_{L_1} \times \mathcal{Q}^{X_k}_{L_2} \times ... \times \mathcal{Q}^{X_k}_{L_{|L|}} \times R$ yields every way of combining subcovers from each subtree to form a sweep-cover. Doing this for every partition $p \in P_C$ and integer composition $X \in \Gamma_{n - |R|,|L|}$ exhaustively yields every possible sweep-cover with $n$ subcovers.
\end{proof}

 In the next section we will provide an algorithm for computing sweep-covers on a given tree using Lemma \ref{lem:inductive_step}.

\section{Algorithm to Compute Sweep-Covers in a Tree}
\label{sec:Algorithm}


\subsection{Algorithm to Find All Sweep-Covers}

\begin{algorithm*}[t!]
	\caption{Compute all Sweep Covers with $n$ subcovers in a tree $\mathcal{T}$}
	\label{alg:Find_Sweep_Covers}
	\textbf{Input}: A tree $\mathcal{T}$ and the number of subcovers $n$ in each sweep-cover.\\
	\textbf{Output}: A collection of all sweep covers $\mathcal{Q}$ with $n$ subcovers in $\mathcal{T}$.

\begin{algorithmic}[1]
\Procedure{SweepCovers}{$\mathcal{T}$,$n$}

\State{$\mathcal{Q} \leftarrow \emptyset$}
\State{$v_s \leftarrow$ root of $\mathcal{T}$}
\State{$C \leftarrow$ set of children of last node in the linear path beginning at $v_s$}
\If{$(n == 1)$}
    \State{\textbf{W} $\leftarrow$ collection of singletons on the linear path from $v_s$ to $\mathcal{L}(v_s)$}
    \State{ \textbf{$\mathcal{Q} \leftarrow \{\{W \cup \{C\} \}\}$ }}
\Else
\State{$H \leftarrow$ all ways of partitioning $C$ into $\{1,2,...,\min\{n,|C|\}\}$ parts}
\For{each $H_i \in H$}
    \State{$L \leftarrow$ a collection of all singletons in $H_i$}
    \State{$R \leftarrow$ a collection of all non-singletons in $H_i$}
    \If{($|L| == 0$ and $|R| == n$)}
        \State{$\mathcal{Q} \leftarrow \mathcal{Q} \cup \{\{R\}\}$}
    \Else
        \For{each integer composition $X \in \Gamma^{n-|R|}_{|L|}$}
            \State{$\textbf{W} = \emptyset$}
            \For{$k = 1 \rightarrow|L|$ }
                \State{Let $\mathcal{T}_{L_k}$ be the induced subtree defined by $L_k \in L$}
                \State{\textbf{W} $=$ \textbf{W} $\times$ SweepCovers($\mathcal{T}_{L_k}$,${X}_{k}$) }
                    
            \EndFor
            \State{\textbf{W} $=$ \textbf{W} $\times \, \{R\}$}
            
            \For{all $S \in$ \textbf{W} : $|S| == n$}
                \State{$\mathcal{Q} \leftarrow \mathcal{Q} \cup \{\{S\}\} $}
            \EndFor
        \EndFor
    \EndIf
\EndFor
\EndIf

\State{\textbf{ Return} $\mathcal{Q}$}

\EndProcedure
\end{algorithmic}
\label{alg:sweep_covers}
\end{algorithm*}

In this section, we provide Algorithm \ref{alg:sweep_covers} to compute all sweep-covers with $n$ subcovers on a given tree.



\subsection{Correctness Proofs}

\begin{lemma}
Algorithm \ref{alg:Find_Sweep_Covers} returns all possible sweep-covers with one subcover for any tree $\mathcal{T}$.
\label{lem:correctness:base_case}
\end{lemma}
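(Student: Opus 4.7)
The plan is to directly characterize every sweep-cover of size $1$ on an arbitrary tree $\mathcal{T}$ and then match this characterization against the collection emitted by the $n=1$ branch of Algorithm \ref{alg:Find_Sweep_Covers}. Writing a candidate size-$1$ sweep-cover as $\{S_1\}$, I would first apply Lemma \ref{lem:sweep_cover_linear_path} to the single induced sub-graph $\mathcal{T}_1 = \mathcal{T}$ (valid by Corollary \ref{cor:sweep_cover_preserves_R}); this immediately tells me that the parent of the nodes in $S_1$ is $\mathcal{L}(s)$ and that the path from $s$ down to $\mathcal{L}(s)$ is linear. The corner case in which $S_1 \ni s$ itself would be handled separately, treating it as the degenerate situation $\mathcal{L}(s) = s$.

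Next I would case-split on $|S_1|$. If $|S_1| \ge 2$, then Definition \ref{def:SweepCover}.\ref{sweep_cover:complete_graph} forces $S_1$ to contain every child of $\mathcal{L}(s)$, because any omitted child is neither a member of $S_1$, an ancestor of $S_1$, nor a descendant of $S_1$; hence $S_1 = C$, which matches the set $\{C\}$ that the algorithm appends. If $|S_1| = 1$ with $S_1 = \{v\}$, then the covering condition requires every node to lie on an ancestor/descendant relation with $v$, and combined with the linear-path conclusion above, this forces $v$ onto the linear path from $s$ to $\mathcal{L}(s)$; these are precisely the singletons collected into $W$.

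The converse direction -- soundness -- would then be verified by checking Definition \ref{def:SweepCover} on each produced candidate: above any $v$ on the linear path every node is an ancestor and below $v$ every node lies in its subtree, while for $C$ the ancestors fill the linear path up to $s$ and the descendants fill the rest of the tree. A short appeal to Lemma \ref{lem:swap_children} starting from the trivial sweep-cover $\{\{s\}\}$ gives a cleaner alternative route for this step.

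The main obstacle I anticipate is the $|S_1| \ge 2$ case: I must exclude sibling sets whose parent is some deeper branching node below $\mathcal{L}(s)$, since at first glance they could seem to produce valid size-$1$ covers. Lemma \ref{lem:sweep_cover_linear_path} is exactly the right tool, because it pins the parent of $S_1$ to $\mathcal{L}(s)$ and no lower. Some additional care is required in two boundary situations -- when $\mathcal{L}(s)$ is a leaf (so $C$ is empty and only the $W$ singletons remain) and when $\mathcal{L}(s) = s$ (so the linear path degenerates to $\{s\}$ and $W$ contributes just $\{\{s\}\}$) -- both of which I would dispatch by inspection.
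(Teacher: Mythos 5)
Your proposal reaches the correct characterization and its overall shape matches the paper's: construct the candidate covers (the singletons on the linear path from $s$ to $\mathcal{L}(s)$ together with the full child set $C$), then show nothing else can be a sweep-cover of size one. The two arguments diverge in how they prove completeness. The paper argues directly by contradiction: a size-one cover whose nodes lie in $\mathcal{D}_C$ would, to satisfy Definition \ref{def:SweepCover}.\ref{sweep_cover:complete_graph}, need nodes descending from two distinct children of $\mathcal{L}(s)$, and such nodes cannot be siblings (Definition \ref{def:SweepCover}.\ref{sweep_cover:sibling}), forcing more than one set. You instead route through the induced-subgraph machinery (Corollary \ref{cor:sweep_cover_preserves_R} gives $\mathcal{T}_1=\mathcal{T}$, and Lemma \ref{lem:sweep_cover_linear_path} then constrains the ancestors of $S_1$ to a linear path), which has the advantage of reusing lemmas already proved and turning the exclusion of deeper branching parents into a one-line consequence rather than an ad hoc sibling argument.

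One step needs repair, however. You assert that Lemma \ref{lem:sweep_cover_linear_path} ``immediately tells me that the parent of the nodes in $S_1$ is $\mathcal{L}(s)$.'' That cannot be right as stated: the singletons in $W$ are valid size-one sweep-covers whose parents sit strictly above $\mathcal{L}(s)$ on the linear path, so a premise forcing the parent to equal $\mathcal{L}(s)$ would wrongly exclude exactly half of the algorithm's output, and your own $|S_1|=1$ branch then contradicts that premise by placing $v$ on the path. What the lemma actually delivers when $\mathcal{T}_1=\mathcal{T}$ is only that every strict ancestor of the parent of $S_1$ has out-degree one in $\mathcal{T}$, i.e.\ the parent lies somewhere on the linear path from $s$ to $\mathcal{L}(s)$; the stronger conclusion that the parent equals $\mathcal{L}(s)$ follows only in the $|S_1|\ge 2$ case, because an intermediate path node has out-degree exactly one and so cannot parent two siblings. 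With that correction your two cases go through as planned, and your soundness direction via Lemma \ref{lem:swap_children} starting from $\{\{s\}\}$ is exactly the paper's.
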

\begin{proof}
The root sweep-cover can be considered a sweep-cover with one subcover. By applying $\alpha$-mappings beginning with the root sweep-cover $S=\{\{v_s\}\}$, eventually one recovers all sweep-covers with one subcover along the linear path beginning at $v_s$ plus the sweep-cover $S=\{C\}$. This is captured by lines 6 and 7. We can show these are the only sweep-covers with one subcover by using the properties of $\alpha$ and $\pi$ mappings. Beginning with the root sweep-cover, because there exists a linear path between the root node and $\mathcal{L}(v_s)$, the only allowed mappings are $\alpha$ mappings until the sweep-cover is $S=\{C\}$. However, an $\alpha$ mapping is no longer valid because $C$ is the only subcover and its non-singleton. So, the following mapping in the sequence must be a $\pi$ mapping. So, all constructible sweep-covers in the tree must begin with a sequence of $\alpha$-mappings followed by one $\pi$-mapping. Since $\pi$-mappings create more subcovers while $\alpha$-mappings maintain the number of subcovers, one cannot decrease the number of subcovers to 1 after the first use of the $\pi$-mapping. Therefore, all other sweep-covers will have at least two subcovers.
\end{proof}

\begin{theorem}
Algorithm \ref{alg:Find_Sweep_Covers} returns all sweep covers with $n$ subcovers from $\mathcal{T}$. If none are possible, then the algorithm returns an empty set.
\label{thm:correctness:full_correctness}
\end{theorem}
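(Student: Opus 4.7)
The plan is to prove the theorem by strong induction on the sweep-cover size $n$, tying the algorithm's control flow directly to the structural characterization results already established. The base case $n=1$ is immediate: the algorithm enters lines 5--7 and returns the collection of singletons on the linear path from $s$ to $\mathcal{L}(s)$ together with $\{C\}$, which Lemma \ref{lem:correctness:base_case} shows is exactly the set of all size-$1$ sweep covers of $\mathcal{T}$. If $\mathcal{T}$ has no size-$1$ sweep cover (a degenerate case such as a single-node tree), the same lemma forces the returned collection to match.

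For the inductive step, I would assume the claim holds for every proper subtree and every size $n'<n$, and then appeal to Lemma \ref{lem:all_sweep_covers}, which asserts that every size-$n$ sweep cover of $\mathcal{T}$ arises either by Lemma \ref{lem:swap_children} (the ``$|R|=n$, $|L|=0$'' case of a partition $C'$ of $C$) or by the construction in Lemma \ref{lem:singleton} (in which $L$ supplies $0<|L|\le n-|R|$ singletons whose subtrees host further sweep covers of sizes given by an ordered integer partition of $n-|R|$ into $|L|$ parts). The remainder of the argument is a matching exercise against the algorithm: line 9 enumerates every partition $H_i$ of $C$; lines 11--12 split $H_i$ into $L$ and $R$; lines 13--14 handle the first Lemma \ref{lem:swap_children} case exactly; and lines 16--27 iterate over every ordered integer partition $Y_j$ of $n-|R|$ and, via the Cartesian product update $W \leftarrow W \times \mathrm{SweepCovers}(\mathcal{T}_{L_k},Y_{j_k})$, combine the recursive sweep covers on each subtree rooted at a singleton of $L$. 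The inductive hypothesis guarantees that each recursive call returns precisely the sweep covers of the prescribed size on its subtree, so every enumerated $S = R \cup \bigcup_{k} S^{(k)}$ is a valid size-$n$ sweep cover, and by Lemma \ref{lem:all_sweep_covers} every size-$n$ sweep cover arises in this way.

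The main obstacle I anticipate is the bookkeeping around the guards and filters in the algorithm, specifically the truncation on line 9 to at most $\min\{n,|C|\}$ parts and the size-$n$ check on line 26. I would argue separately that (i) no partition of $C$ with more than $n$ parts can contribute, since $|R|\le|C'|\le n$ is forced whenever a sweep cover of size $n$ is produced through the Lemma \ref{lem:singleton} construction, so the truncation is lossless; and (ii) whenever any recursive call returns $\emptyset$ the Cartesian product collapses to $\emptyset$, so nothing invalid is appended and the filter on line 26 merely discards partial products that cannot be completed. The latter observation also yields the ``empty set'' clause: if no partition $H_i$ admits any completion to a size-$n$ sweep cover --- either directly through $R$ or through nonempty recursive returns --- then $\mathcal{Q}$ stays empty, which by Lemma \ref{lem:all_sweep_covers} happens precisely when $\mathcal{T}$ has no size-$n$ sweep cover. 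Combining these observations closes the induction and establishes both parts of the theorem.
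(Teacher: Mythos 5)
Your proposal is correct and follows essentially the same route as the paper: strong induction on $n$, with the base case discharged by Lemma \ref{lem:correctness:base_case} and the inductive step reduced to Lemma \ref{lem:all_sweep_covers} (hence to Lemmas \ref{lem:swap_children} and \ref{lem:singleton}) matched line-by-line against the algorithm. Your extra bookkeeping about the $\min\{n,|C|\}$ truncation and the collapse of empty Cartesian products is a welcome tightening of details the paper's proof passes over silently.
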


\begin{proof}
We will prove this by induction. Lemma \ref{lem:correctness:base_case} proved the base case where $n=1$. One can see that the algorithm adheres to the base case in lines 5-7. Now, assume for the inductive step that the algorithm returns all sweep-covers with fewer than $n$ subcovers in a tree $\mathcal{T}$. By applying the same arguments as in Lemma \ref{lem:inductive_step}, the algorithm itself acts as the oracle $\mathcal{O}_m(\mathcal{T})$ in Lemma \ref{lem:inductive_step} to verify the inductive step in lines 8-23. Lines 22-24 show that if it is impossible to find the correct number of subcovers in any given subtree, then $\mathcal{Q}$ will not contain a sweep-cover with fewer than $n$ subcovers. This is easily verifiable by induction too. If none are of size $n$, then $\mathcal{Q}$ is returned empty. Therefore, Algorithm \ref{alg:Find_Sweep_Covers} returns all sweep covers of size $n$ or an empty set.
\end{proof}

\section{Sweep-Covers in Infinite Trees}
 One important caveat differentiating the analysis of sweep-covers is that the enumeration expression for all sweep-covers can only be defined for a given class of trees. For the sake of providing a useful bound for algorithm analysis, in this section, we prove the recurrence relation for computing sweep-covers in a class of infinite trees with bounded out-degree and linear path length. The number of sweep-covers in this class of trees may be an upper bound for every other infinite class of trees, though no proof is provided in this work.

\begin{definition}[Infinite, Labeled, Directed (ILD) Trees]
Consider a class of infinite height, labeled, and directed trees (ILD trees) composed of only $\Delta$-star nodes and nodes with only one outgoing edge. Let there be a linear path of constant edge length, $\gamma$, between the children of any given $\Delta$-star and the next $\Delta$-star node. These trees are denoted as $\mathcal{T}_{\Delta,\gamma}$. We denote $P_{\Delta,\gamma}(n)$ to be the enumeration of all sweep-covers with $n$ subcovers on $\mathcal{T}_{\Delta,\gamma}$.
\end{definition}
\begin{figure}
    \centering
    \includegraphics[width=0.5\textwidth]{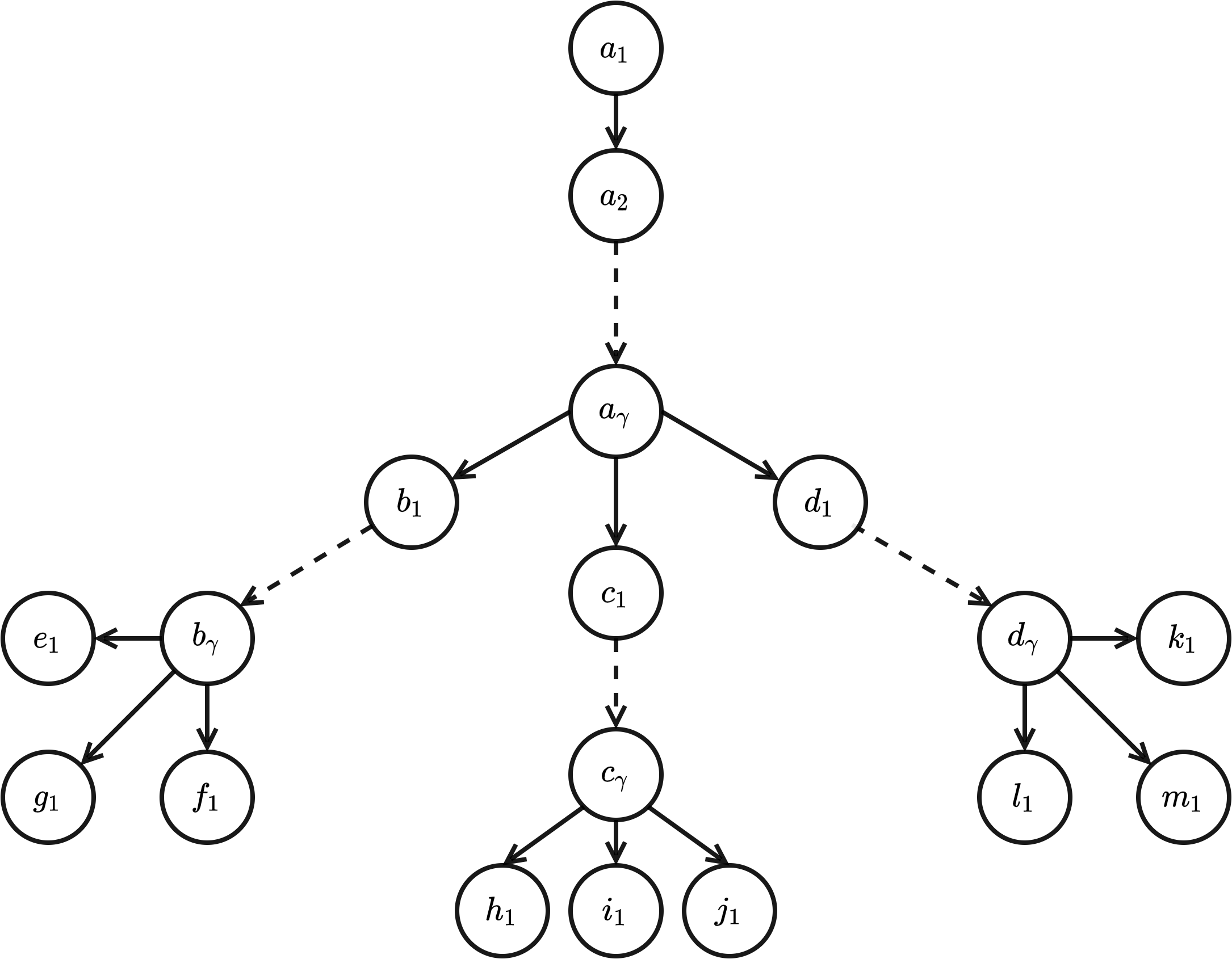}
    \caption{The infinite height, labeled, and directed tree for $\Delta = 3$, i.e., $\mathcal{T}_{3,\gamma}$}.
\label{fig:Partition_Tree}
\end{figure}

As we will show, the enumeration expression for the number of sweep-covers depends only on the number of subcovers in the sweep-cover, $n$, the bounded out-degree of the ILD tree, $\Delta$, and the maximum linear path length, $\gamma$. If $\gamma$ and $\Delta$ are fixed constants, then the number of sweep-covers of size $n$ is bounded even on infinite trees. This is because Definitions \ref{def:SweepCover}.\ref{sweep_cover:complete_graph} and \ref{def:SweepCover}.\ref{sweep_cover:sibling} force a $\pi$-mapping every time we wish to use an $\alpha$-mapping to construct a sweep-cover with nodes of greater depth. Therefore, the maximum depth of any node in a subcover is always bounded by $n$. If we take away either of these definition requirements, then the number of sweep-covers on infinite trees is also infinite. To define the enumeration expression for sweep-covers in ILD trees, we simply compute the enumeration expressions for Lemmas \ref{lem:inductive_step} and \ref{lem:correctness:base_case} when we assume the symmetry and path lengths of the ILD trees.

\begin{definition}
Define $R(n,m)$ to be the enumeration of all ways of placing $n$ unique elements into $m$ indistinct, non-singleton sets 
\begin{equation}
    R(n,m) \triangleq \sum_{s=n-m}^{n} \binom{n}{s} (-1)^{n-s} \stirling{s}{ s+m-n}  
\end{equation} where $\stirling{n}{k}$ is the Stirling number of the second kind \cite{boyadzhiev2018close,BONA2016500,knuth1992notes}. \label{def:R_nm_stirling}
\end{definition}

Given a complete set of sibling nodes $C$, define a subset $C' \subseteq C$ such that $|C'| = n$. Then, $R(n,m)$ is equivalent to enumerating all ways of constructing the collection $R$ with $m$ sets in Lemma \ref{lem:inductive_step} using only the nodes in $C'$. In other words, for each way of constructing $L$ such that $n$ nodes remain for $R$, $R(n,m)$ computes all ways of constructing $R$ with $m$ subcovers from the $n$ available nodes. Now, given a set of $n$ nodes for constructing $L$, we will enumerate all ways of constructing $k$ subcovers from the induced subtrees defined by the $n$ nodes.

\begin{definition}
Consider the tree $\mathcal{T}_{\Delta,\gamma}$ and its source node $v_s \in \mathcal{T}_{\Delta,\gamma}$. Let $C$ be the set of children of $\mathcal{L}(v_s)$. Let $p \in P_C$ be some partition of $C$ into a collection of singletons $L$ such that $|L| == k$. The following is the enumeration expression for all possible ways of combining a collection of sweep-covers from the induced subtrees defined by the nodes in $L$ such that there are $n$ subcovers in the collection. Note that due to the symmetry of ILD trees, all of the induced subtrees are identical to the same ILD tree. So, the number of sweep-covers in each subtree reduces to $P_{\Delta,\gamma}(n)$.

\label{def:Ldelta}
\begin{equation}
    L_{\Delta}(n,k) \triangleq \sum_{X \in \Gamma_{n,k} }{\prod_{x_j \in X}P_{\Delta,\gamma}(x_j)} 
    \label{eq:LDelta}
\end{equation} 
\end{definition}

\begin{lemma}
 The following expression is the recurrence relation for enumerating sweep-covers on ILD trees $\mathcal{T}_{\Delta,\gamma}$ for any out-degree $\Delta > 1$. Note that for intuition, $l$ and $r$ iterate through the sizes of $L$ and $R$, respectively, from Lemma \ref{lem:inductive_step}.
 \begin{equation}
      P_{\Delta,\gamma}(1) = \gamma + 1 \label{eq:recurrence1}
 \end{equation}
\begin{align}
P_{\Delta,\gamma}(n) =& 
\sum_{l=1}^{\Delta - 2}{ \binom{\Delta}{l}
\sum_{r=1}^{\Delta - l} { L_{\Delta}(n-r,l) 
R(\Delta - l,r)}} \label{eq:Precurr_left} \\
&+ L_{\Delta}(n,\Delta) \label{eq:Precurr_mid} \\
&+ R(\Delta,n) \label{eq:Precurr_right}
\end{align}\label{lem:recurrence}
\end{lemma}
\begin{proof} 
 The base case for $n = 1$ is given by $P_{\Delta,\gamma}(1)$ and proven by counting the number of $\alpha$-mappings using Lemma \ref{lem:correctness:base_case}. For $n > 1$, consider the recursive definition of a sweep-cover given by Lemma \ref{lem:inductive_step} and the set of children $C$ of $\mathcal{L}(v_s)$. By Definitions \ref{def:R_nm_stirling} and \ref{def:Ldelta}, given a collection $L$ of $l$ singletons and a collection $R$ of $r$ non-singletons from a partition $p \in P_C$, $L_\Delta(n - r,l)R(\Delta-l,r)$ enumerates all ways of constructing a sweep-cover with $n$ subcovers. Due to the symmetry of $\mathcal{T}_{\Delta,\gamma}$, the number of sweep-covers from the induced subtrees defined by $L$ for a given $l$ is equal for each induced subtree. So, for each way of choosing $l$ singletons from $\Delta$ nodes, the number of sweep-covers is the same, hence we can multiply by $\binom{\Delta}{l}$ to enumerate each way of choosing $l$ singletons. Therefore, $\binom{\Delta}{l}L_\Delta(n - r,l)R(\Delta-l,r)$ enumerates all ways of applying Lemma \ref{lem:inductive_step} for a fixed $|R| = r$ and $|L| = l$.  Likewise, $R(\Delta-l,r)$ computes all ways of distributing the remaining $\Delta - l$ nodes into $r$ non-singleton sets. Because both $|L| = \Delta - 1$ and $|L| = \Delta$ imply $\Delta$ sets of singletons, we separate $|L| = 1 \rightarrow \Delta - 2$ and $|L| = \Delta$ for the enumeration, hence (\ref{eq:Precurr_mid}). Likewise, due to $l = 0$ implying $L_{\Delta}(n-r,l) = 0$, we also separate the term that computes all ways of constructing a sweep-cover with $n$ subcovers using $C$, i.e., $R(\Delta,n)$.
\end{proof}

\section{Raney Numbers as Lower Bounds for ILD Trees}
\label{sec:Raney}

\begin{definition}[Raney Numbers]
Raney numbers are a two-parameter generalization of Catalan numbers introduced by Raney, who was studying functional composition patterns \cite{Aval2008,raney1960functional}.

\begin{equation}
    C_{p,r}(k) \triangleq \frac{r}{kp+r}\binom{kp+r}{k}
\end{equation}

\end{definition}

\begin{definition}[Raney Tree]
A Raney Tree of type (p,r,k), also referred to as a coral diagram of type (p,r,k) \cite{ZHOU2017114}, is defined to be a labeled tree whose root node has an out-degree of $r$ and all other $k - 1$ internal nodes have an out-degree of $p$. 
\end{definition}
The enumeration of all labeled Raney Trees of a given type was proven to be equal to its Raney number, $C_{p,r}(k)$, by Zhou and Yan \cite{ZHOU2017114}.

\begin{corollary}
 $P_{2,0}(n) = C_{2,1}(n+1)$. In other words, the number of sweep-covers of size n on an ILD of $\Delta = 2$ and $\gamma = 0$ is equal to the Catalan number for n+1. \label{cor:Catalan_Proof}
\end{corollary}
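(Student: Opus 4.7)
The plan is to specialize the recurrence from Lemma \ref{lem:recurrence} to the case $\Delta = 2, \gamma = 0$ and then match the result against the Catalan convolution identity. With $\Delta = 2$, the outer summation $\sum_{l = 1}^{\Delta - 2}$ is empty, and $R(2, n)$---which counts partitions of two sibling children into $n$ non-singleton groups---equals $1$ when $n = 1$ and $0$ otherwise. Thus for $n > 1$ the recurrence collapses to a pure convolution
\[
P_{2, 0}(n) \;=\; L_2(n, 2) \;=\; \sum_{\substack{k_1 + k_2 = n \\ k_1, k_2 \geq 1}} P_{2, 0}(k_1)\, P_{2, 0}(k_2),
\]
which has exactly the functional form of the Catalan convolution.

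Next, I would reduce the target to a Catalan statement using the Raney-to-Catalan identity $C_{2, 1}(k) = \frac{1}{2k + 1}\binom{2k + 1}{k} = \frac{1}{k + 1}\binom{2k}{k} = C_k$, a routine algebraic simplification that identifies the Raney number with the ordinary Catalan number. The claim then reads $P_{2, 0}(n) = C_{n + 1}$. The classical Catalan recurrence $C_{n + 1} = \sum_{i + j = n,\; i, j \geq 0} C_i C_j$, after reindexing $i = k_1 - 1$ and $j = k_2 - 1$, mirrors the sweep-cover convolution displayed above, and this algebraic alignment sets up an induction on $n$.

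The main obstacle will be fixing the base case and the index shift correctly. Starting from Lemma \ref{lem:correctness:base_case}, the only size-one sweep-covers on the infinite complete binary tree arise as the singleton $\{v_s\}$ and the sibling pair $\{c_1, c_2\}$ consisting of $v_s$'s two children; this yields $P_{2, 0}(1) = 2 = C_2 = C_{2, 1}(2)$, which anchors the induction at $n = 1$. Once the base case is pinned down, the two convolutions share structure and the inductive step is a bookkeeping exercise that carries the equality $P_{2, 0}(n) = C_{2, 1}(n+1)$ forward to all $n \geq 1$, completing the proof.
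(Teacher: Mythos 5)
Your specialization of the recurrence is exactly what the paper does: for $\Delta = 2$ the outer sum over $l = 1,\ldots,\Delta-2$ is empty, $R(2,n)$ survives only at $n=1$, and for $n>1$ the relation collapses to the Segner convolution $P_{2,0}(n) = L_2(n,2) = \sum_{k_1+k_2=n,\, k_i\ge 1} P_{2,0}(k_1)P_{2,0}(k_2)$; the simplification $C_{2,1}(k) = \frac{1}{k+1}\binom{2k}{k}$ is also fine. The gap is in the base case and the index bookkeeping, which you correctly identify as the delicate point but then get wrong. The quantity $P_{\Delta,\gamma}(n)$ is defined by the recurrence of Lemma \ref{lem:recurrence}, whose base case is $P_{\Delta,\gamma}(1) = \gamma+1$, i.e.\ $P_{2,0}(1) = 1$; the paper's proof likewise takes $P_{2,0}(1) = R(2,1) = 1$, and the data table for $\Delta = 2$ reads $1,1,2,5,14,42,\ldots$. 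If you instead seed the convolution with $P_{2,0}(1) = 2$, you get $P_{2,0}(2) = 4$, then $P_{2,0}(3) = 16$, and the Catalan sequence never appears, so the induction cannot close. (Your direct count of two size-one sweep-covers, $\{v_s\}$ and the sibling pair of its children, is a defensible reading of Lemma \ref{lem:correctness:base_case}, but it is not the value the recurrence under analysis assigns to $P_{2,0}(1)$.)

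Second, the ``algebraic alignment'' you assert does not hold with your indices. Substituting $i = k_1-1$, $j = k_2-1$ into the sum over $k_1+k_2=n$, $k_i\ge 1$ gives $i+j = n-2$ with $i,j\ge 0$, so Segner's identity yields $\sum C_iC_j = C_{n-1}$: the convolution seeded with $C_0 = 1$ propagates $P_{2,0}(n) = C_{n-1}$, not $C_{n+1}$. Equivalently, the identity your stated conclusion would require, $C_{n+1} = \sum_{a+b=n+2,\, a,b\ge 2} C_aC_b$, is false (at $n=2$ the right side is $C_2^2 = 4$ while $C_3 = 5$); the parts-$\ge 1$ convolution is missing the boundary terms of Segner's recurrence, and nothing else in the recurrence supplies them since $R(2,n)=0$ for $n>1$. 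So your base case points at $C_{n+1}$ while your inductive step propagates $C_{n-1}$, and the ``bookkeeping exercise'' is precisely where the argument breaks. The repair is to take $P_{2,0}(1) = 1 = C_0$ and conclude $P_{2,0}(n) = C_{n-1} = C_{2,1}(n-1)$, which matches the table; this also exposes that the corollary's stated index $n+1$ is itself shifted, a point the paper's one-line proof (which simply asserts the $C_{2,1}(n+1)$ identification after invoking Segner) does not justify either.
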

\begin{proof}
First note that $P_{2,0}(n) =  L_2(n,2) + R(2,n)$ and for $n = 1$, $P_{2,0}(1) = 1$. Using Segner's recurrence relation for $n > 1$ \cite{Segner1758}: 
\begin{align}
    P_{2,0}(n) = L_{\Delta,\gamma}(2,n) = \sum_{n' = 1}^{n}P_{2,0}(n-n')P_{2,0}(n') = C_{2,1}(n+1).
\end{align} \end{proof}
Continuing this observation, we can write the following recurrence relation for Raney trees in a similar manner to the recurrence relation for sweep-covers.
\begin{theorem}
 \begin{equation}
    C_{p,r}(k) = \sum_{l=1}^{r} \binom{r}{l}\sum_{X \in \Gamma_{k-1,l}} \prod_{x_j \in X} C_{p,p}(x_j)
\end{equation}
\end{theorem}
\begin{proof}
Beginning with an $r$-star root node, we can construct all Raney trees of coral type $(p,r,k)$ by enumerating all ways of choosing the leaves of the $r$-star and constructing trees with $k - 1$, $p$-star internal nodes rooted at the leaves. We can do this exhaustively by considering all ways of choosing a subset of the leaves of the $r$-star. For each subset of leaves with size $l$, we consider all integer compositions of $k-1$ internal nodes into the $l$ available leaves. For each integer composition $X \in \Gamma_{k-1,l}$, we assign one part to one leaf node and compute the number of trees with $x_j$, $p$-star internal nodes, i.e., $C_{p,p}(x_j)$. 
\end{proof}

\begin{lemma}
 For each Raney tree of type $(p,p,k)$, there exists a sweep-cover in $\mathcal{T}_{p,0}$ with $k$ subcovers, i.e., $P_{\Delta,0}(n) \geq C_{\Delta,\Delta}(n)$. \label{lem:Raney_Tree_to_ILD}
\end{lemma}
\begin{proof}
First note that because $\gamma = 0$ and $\mathcal{T}_{p,0}$ is infinite, we can construct a subtree $\mathcal{R} \subset \mathcal{T}_{p,0}$ for each way of choosing $k$ connected $p$-star nodes beginning at the root node, i.e., all Raney trees of type $(p,p,k)$. Note that we can enumerate all ways of choosing $k$ internal nodes of $\mathcal{T}_{p,0}$ by following $\alpha$ and $\pi$ mappings and choosing the singleton node as the internal node of the subtree whenever we use an $\alpha$-mapping. Then, because we can construct all Raney trees using $\alpha$ and $\pi$ mappings, a sweep-cover with $k$ subcovers must exist for each Raney tree with $k$ internal nodes.
\end{proof}
\begin{lemma} \label{thm:P_Raney}
 \begin{equation}
    P_{\Delta,\gamma}(n) \geq C_{\Delta,\Delta}(n)\gamma
\end{equation}
\end{lemma}
\begin{proof}
By increasing $\gamma$ in any ILD tree $\mathcal{T}_{p,\gamma}$, only the linear path length in each embedding tree for $\mathcal{T}_{p,\gamma}$ will increase. Therefore, for each sweep-cover in $\mathcal{T}_{p,\gamma}$ with $k$ subcovers, there exists a sweep cover in $\mathcal{T}_{p,\gamma+1}$ with $k$ subcovers. For each embedding tree, there exists at least one internal node at the lowest depth. By applying $\alpha^{-1}$-mappings to this node, we can construct $\gamma+1$ sweep-covers which are heteromorphic to every other sweep-cover.
\end{proof}
\begin{theorem}
 \begin{equation}
     P_{\Delta,\gamma}(n) = \Omega\left( \frac{\sqrt{2 \pi} n^{\Delta n + \Delta + \frac{3}{2}}}{e^n ((\Delta-1)n+\Delta+1)!(n+1)!} \gamma \right)
 \end{equation}
\end{theorem}
\begin{proof}
Using Lemma \ref{thm:P_Raney}, we compute the asymptotics 
\begin{align}
    C_{\Delta,1}(n+1) &= \frac{1}{(n+1)\Delta+1}\binom{(n+1)\Delta+1}{ n + 1} = \frac{1}{(n+1)!} \prod_{i=0}^{n-1}(\Delta(n+1) + 1 - i))\\
    &=\frac{1}{(n+1)!}((\Delta-1)n+\Delta+2)^{(n)}
\end{align}
where $((\Delta-1)n+\Delta+2)^{(n)}$ is the Pochhammer symbol which satisfies the lower bound \cite{abramowitz+stegun}.
\end{proof}

\section{Sweep-Cover Sequences}
\label{sec:Data}
In this section, we provide a table of computed values for enumerating sweep-covers on ILD trees. This table assumes $\gamma = 0$. Python code is available for generating these integers and they've been hand checked by following Lemma \ref{lem:inductive_step} for $\Delta \le 5$ and $n \le 6$ \cite{blake_wilson_2020_4034921}.


\begin{table}
\centering
\resizebox{\textwidth}{!}{
\begin{tabular}{lllllllll}
\hline
 \multicolumn{9}{|c|}{$P_{\Delta,0}(n)$} \\ \hline
\multicolumn{1}{|l|}{ $\Delta \backslash n$}\vrule width 1pt &
  \multicolumn{1}{l|}{1} &
  \multicolumn{1}{l|}{2} &
  \multicolumn{1}{l|}{3} &
  \multicolumn{1}{l|}{4} &
  \multicolumn{1}{l|}{5} &
  \multicolumn{1}{l|}{6} &
  \multicolumn{1}{l|}{7} &
  \multicolumn{1}{l|}{8} \\ \hlineB{2}
\multicolumn{1}{|l|}{2}\vrule width 1pt &
  \multicolumn{1}{l|}{1} &
  \multicolumn{1}{l|}{1} &
  \multicolumn{1}{l|}{2} &
  \multicolumn{1}{l|}{5} &
  \multicolumn{1}{l|}{14} &
  \multicolumn{1}{l|}{42} &
  \multicolumn{1}{l|}{132} &
  \multicolumn{1}{l|}{429} \\ \hline
\multicolumn{1}{|l|}{3}\vrule width 1pt &
  \multicolumn{1}{l|}{1} &
  \multicolumn{1}{l|}{3} &
  \multicolumn{1}{l|}{10} &
  \multicolumn{1}{l|}{39} &
  \multicolumn{1}{l|}{174} &
  \multicolumn{1}{l|}{846} &
  \multicolumn{1}{l|}{4332} &
  \multicolumn{1}{l|}{22959} \\ \hline
\multicolumn{1}{|l|}{4}\vrule width 1pt &
  \multicolumn{1}{l|}{1} &
  \multicolumn{1}{l|}{7} &
  \multicolumn{1}{l|}{34} &
  \multicolumn{1}{l|}{221} &
  \multicolumn{1}{l|}{1614} &
  \multicolumn{1}{l|}{12394} &
  \multicolumn{1}{l|}{99556} &
  \multicolumn{1}{l|}{827045} \\ \hline
\multicolumn{1}{|l|}{5}\vrule width 1pt &
  \multicolumn{1}{l|}{1} &
  \multicolumn{1}{l|}{15} &
  \multicolumn{1}{l|}{100} &
  \multicolumn{1}{l|}{1035} &
  \multicolumn{1}{l|}{11376} &
  \multicolumn{1}{l|}{132930} &
  \multicolumn{1}{l|}{1630860} &
  \multicolumn{1}{l|}{20606355} \\ \hline
\multicolumn{1}{|l|}{6}\vrule width 1pt &
  \multicolumn{1}{l|}{1} &
  \multicolumn{1}{l|}{31} &
  \multicolumn{1}{l|}{276} &
  \multicolumn{1}{l|}{4511} &
  \multicolumn{1}{l|}{70986} &
  \multicolumn{1}{l|}{1232752} &
  \multicolumn{1}{l|}{22295588} &
  \multicolumn{1}{l|}{4.16E+08} \\ \hline
\multicolumn{1}{|l|}{7}\vrule width 1pt &
  \multicolumn{1}{l|}{1} &
  \multicolumn{1}{l|}{63} &
  \multicolumn{1}{l|}{742} &
  \multicolumn{1}{l|}{19215} &
  \multicolumn{1}{l|}{418698} &
  \multicolumn{1}{l|}{10810254} &
  \multicolumn{1}{l|}{2.82E+08} &
  \multicolumn{1}{l|}{7.65E+09} \\ \hline
\multicolumn{1}{|l|}{8}\vrule width 1pt &
  \multicolumn{1}{l|}{1} &
  \multicolumn{1}{l|}{127} &
  \multicolumn{1}{l|}{1982} &
  \multicolumn{1}{l|}{81565} &
  \multicolumn{1}{l|}{2409926} &
  \multicolumn{1}{l|}{93612646} &
  \multicolumn{1}{l|}{3.45E+09} &
  \multicolumn{1}{l|}{1.37E+11} \\ \hline
\multicolumn{1}{|l|}{9}\vrule width 1pt &
  \multicolumn{1}{l|}{1} &
  \multicolumn{1}{l|}{255} &
  \multicolumn{1}{l|}{5320} &
  \multicolumn{1}{l|}{347115} &
  \multicolumn{1}{l|}{13769616} &
  \multicolumn{1}{l|}{8.16E+08} &
  \multicolumn{1}{l|}{4.18E+10} &
  \multicolumn{1}{l|}{2.45E+12} \\ \hline
 &
   &
   &
   &
   &
   &
   &
   &
  
\end{tabular}
}
\end{table}

\acks
This work was supported by the Open Access Funding Project at Purdue University.

\bibliographystyle{unsrt}
\bibliography{references}
\pagebreak
\appendix
\section{Full Sweep-Cover Example}
\label{app:sweep_cover}
\begin{figure}[h!]
    \centering
    \includegraphics[width=0.95\textwidth]{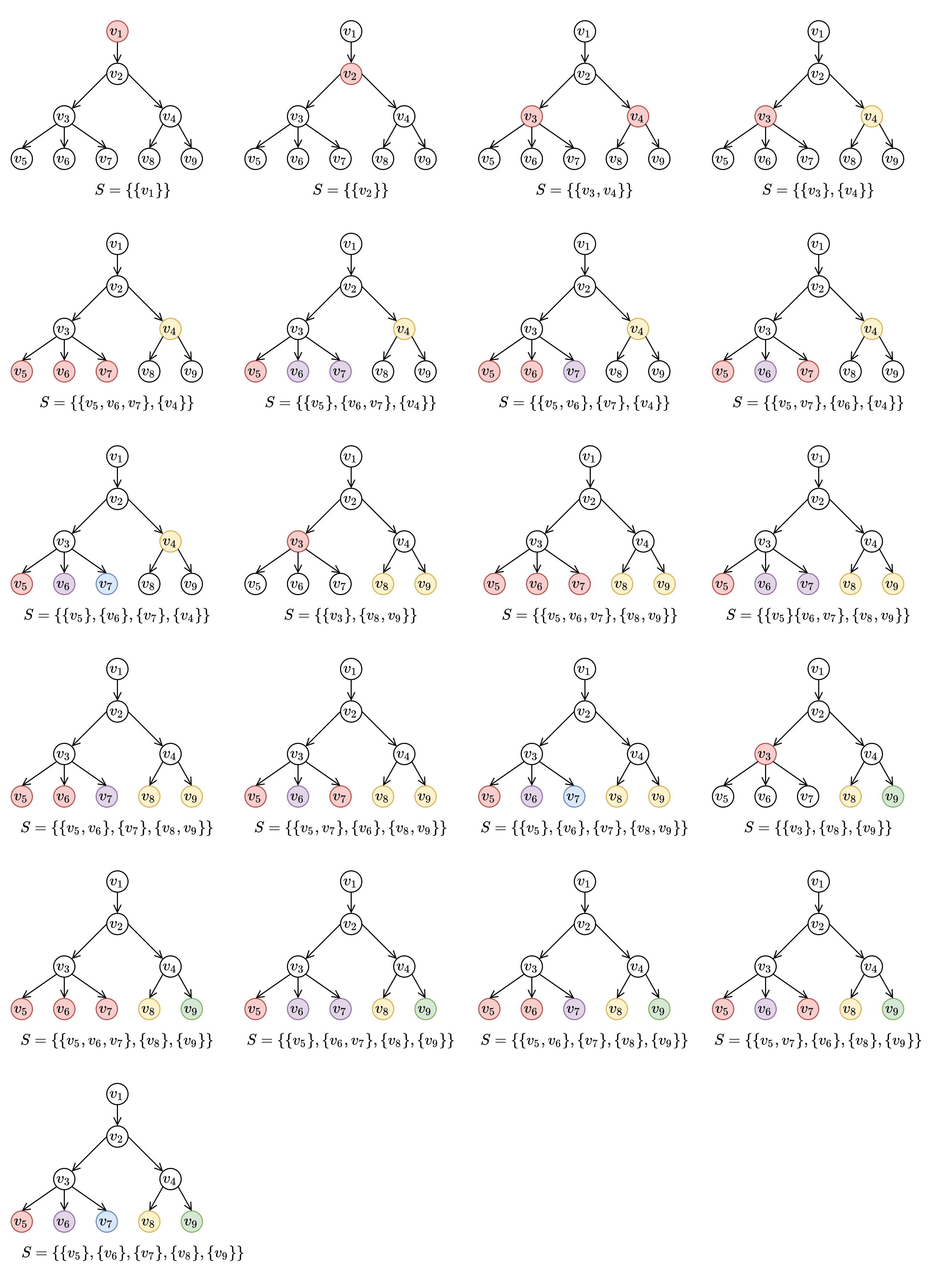}
    \caption{All sweep covers for a tree.}
    \label{fig:full_sweep_cover_example}
\end{figure}

\end{document}